\documentclass[11pt, a4paper]{article}

\usepackage[margin=1in]{geometry}

\usepackage{amsmath, amssymb, amsthm, mathtools}
\usepackage{mathrsfs}
\usepackage{bm}

\usepackage{enumitem}
\usepackage[colorlinks=true, citecolor=blue, linkcolor=blue]{hyperref}

\newtheorem{theorem}{Theorem}[section]
\newtheorem{corollary}[theorem]{Corollary}

\theoremstyle{remark}

\DeclareMathOperator{\Li}{Li}

\title{Bell Transforms of Arithmetic Functions: Euler Products, Congruences, and Polynomial Sequences}
\author{Mahipal Gurram}
\date{\today}

\begin{document}

\maketitle

\begin{abstract}
We present a unified algebraic framework utilizing the formal Bell transform to bridge the Dirichlet convolution of arithmetic functions with the combinatorial structure of infinite Euler-type products. By analyzing the logarithmic derivative of exponential generating functions, we establish explicit mappings between Bell exponents and Möbius inversions. We apply this framework to derive exact vanishing properties and congruence inheritances for classical sequences, including Ramanujan's tau function and prime-colored partitions. Furthermore, we demonstrate that the inverse Bell transform seamlessly recovers classical partition recurrences and provides a discrete combinatorial engine for generating special polynomial families, including classical Appell and Sheffer sequences.
\end{abstract}
\section{Introduction}

The interplay between the discrete realm of arithmetic functions and the continuous domain of formal power series is a cornerstone of analytic number theory and enumerative combinatorics. The expansion of infinite products into power series and the reciprocal extraction of arithmetic sequences from product exponents underpins the study of integer partitions, modular forms, and special functions \cite{Andrews1976}.

Classical identities, particularly those arising in $q$-series, Euler products, and Ramanujan’s theta functions, rely on deep multiplicative structures encoded through divisor sums and Dirichlet convolutions \cite{Andrews1976,HardyWright2008}. In this paper, we present a unified algebraic framework utilizing the formal Bell transform to systematically bridge the Dirichlet convolution algebra of arithmetic functions with the combinatorial geometry of infinite Euler-type products. By analyzing logarithmic derivatives of exponential generating functions, we establish explicit mappings between Bell exponents and Möbius inversions.

Bell polynomials naturally encode the combinatorics of set partitions and exponential generating functions \cite{Comtet1974,Riordan1968}. Using these structures, we derive exact vanishing properties and congruence inheritances for classical arithmetic sequences, including Ramanujan’s tau function and prime-colored partition functions. We further demonstrate that the inverse Bell transform recovers classical partition recurrences and provides a combinatorial engine for generating Appell and Sheffer polynomial sequences 

\subsection{Notations and Preliminaries}

To establish a rigorous foundation, we define the core notations and algebraic structures utilized throughout this paper.

\paragraph{Sets and Arithmetic Functions}
Let $\mathbb{N}$ denote the set of positive integers, and $\mathbb{C}$ the field of complex numbers. An \emph{arithmetic function} is defined as any map
\[
g:\mathbb{N}\to\mathbb{C}.
\]

\paragraph{Dirichlet Convolution}
The set of arithmetic functions forms a commutative ring under pointwise addition and Dirichlet convolution \cite{Apostol1976}. For two arithmetic functions $f$ and $g$, their Dirichlet convolution $(f*g)$ is defined by
\[
(f*g)(n)
=
\sum_{d\mid n}
f(d)\,
g\!\left(\frac{n}{d}\right).
\]

The identity element for this operation is the Dirichlet identity function
\[
\varepsilon(n)
=
\begin{cases}
1, & n=1,\\
0, & n>1.
\end{cases}
\]

\paragraph{Möbius Inversion}
The Möbius function $\mu(n)$ is the Dirichlet inverse of the constant function $1$ and plays a central role in multiplicative number theory \cite{Apostol1976,HardyWright2008}. If
\[
f(n)=\sum_{d\mid n} g(d),
\]
then
\[
g(n)=\sum_{d\mid n}\mu(d)\,f\!\left(\frac{n}{d}\right),
\]
or equivalently,
\[
g=\mu * f.
\]

\paragraph{Formal Power Series and Euler Products}
We work within the ring of formal power series $\mathbb{C}[[x]]$ \cite{Stanley1999}. Euler-type infinite products of the form
\[
\prod_{m\ge1}(1-x^m)^{\beta(m)}
\]
occur naturally in partition theory, modular forms, and generating-function identities \cite{Andrews1976}.

\paragraph{Complete Exponential Bell Polynomials}
The complete Bell polynomials
\[
B_n(x_1,\dots,x_n)
\]
encode the combinatorial structure of set partitions and exponential compositions \cite{Comtet1974}. They are defined through the exponential generating function
\[
\exp\!\left(
\sum_{m\ge1}x_m\frac{t^m}{m!}
\right)
=
\sum_{n\ge0}
\frac{
B_n(x_1,\dots,x_n)
}{n!}\,
t^n.
\]

These polynomials act as algebraic translators connecting logarithmic derivatives of generating functions with coefficient extractions in formal power series.
\section{Main Results}

We begin by establishing the foundational relationship between exponential generating functions and infinite Euler products via Dirichlet convolution.

\begin{theorem}[General Euler-product form of the Bell transform] \label{thm:euler_product}
Let \(g:\mathbb N\to\mathbb C\) be an arithmetic function, and define the formal Bell transform
\[
F_g(x)
=
\exp\!\left(
-\sum_{n\ge1}\frac{g(n)}{n}x^n
\right).
\]
Then \(F_g(x)\) admits the Euler-product expansion
\[
F_g(x)
=
\prod_{m\ge1}(1-x^m)^{\beta_g(m)},
\]
where the Bell exponents \(\beta_g(m)\) are given by
\[
\beta_g(m)
=
\frac1m
\sum_{d\mid m}\mu(d)\,g(m/d).
\]
Equivalently, this may be written in terms of Dirichlet convolution as \( m\beta_g(m) = (\mu*g)(m) \).
\end{theorem}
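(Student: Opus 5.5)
The plan is to compare logarithms of both sides in $\mathbb{C}[[x]]$ and reduce the claim to Möbius inversion. Both $F_g(x)$ and any product $\prod_{m\ge1}(1-x^m)^{\beta(m)}$ have constant term $1$. The formal logarithm is injective on series with constant term $1$, and $\exp$ inverts it there. So it suffices to show
\[
-\sum_{n\ge1}\frac{g(n)}{n}x^n=\sum_{m\ge1}\beta_g(m)\log(1-x^m).
\]
First we make sense of the product for arbitrary complex exponents. We define $(1-x^m)^{\beta}:=\exp\bigl(\beta\log(1-x^m)\bigr)$, where $\log(1-x^m)=-\sum_{k\ge1}x^{mk}/k$. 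Since $\log(1-x^m)\in x^m\mathbb{C}[[x]]$, the sum $\sum_m \beta(m)\log(1-x^m)$ converges $x$-adically. The partial products also converge: the coefficient of $x^N$ is fixed once all factors with $m\le N$ are included. Moreover, $\exp$ of this convergent sum equals the infinite product, because $\exp$ is continuous and multiplicative on $x\mathbb{C}[[x]]$.

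Next we expand the right-hand side and collect coefficients:
\[
\sum_{m\ge1}\beta(m)\log(1-x^m)=-\sum_{m\ge1}\sum_{k\ge1}\beta(m)\frac{x^{mk}}{k}=-\sum_{n\ge1}\frac{x^n}{n}\sum_{m\mid n} m\,\beta(m),
\]
using $1/k=m/n$ when $n=mk$. The rearrangement is legitimate because only finitely many pairs $(m,k)$ contribute to each power $x^n$. Matching coefficients of $x^n$, the required identity becomes
\[
g(n)=\sum_{m\mid n} m\beta_g(m)\quad\text{for all } n\ge1,
\]
that is, $g=1*(\mathrm{id}\cdot\beta_g)$.

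Finally, Möbius inversion as recalled in the preliminaries shows that this holds if and only if $m\beta_g(m)=(\mu*g)(m)$, which is exactly the stated formula. Uniqueness also follows: since $\log$ is injective, the exponents are uniquely determined by $g$.

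The only real care needed is the first step, namely defining complex powers and the infinite product formally and justifying that $\log$ converts the infinite product into the infinite sum. I would handle this with the $x$-adic convergence argument above, working with partial products up to $m\le N$ modulo $x^{N+1}$. In that setting everything is a finite identity of truncated series, and the remaining computation is routine.
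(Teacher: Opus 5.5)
Your proposal is correct and follows the paper's route: take formal logarithms, expand $\log(1-x^m)=-\sum_{k\ge1}x^{mk}/k$, collect coefficients to get $g(n)=\sum_{d\mid n}d\,\beta_g(d)$, and finish with Möbius inversion. Your added justification fills in formal details the paper leaves implicit: defining complex exponents via $\exp$ and $\log$, the $x$-adic convergence of the infinite product, and the injectivity of $\log$ on series with constant term $1$.
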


\begin{proof}
Taking the formal logarithm of the Euler product yields
\[
\log F_g(x)
=
\sum_{m\ge1}\beta_g(m)\log(1-x^m).
\]
Using the Maclaurin series expansion \(\log(1-x^m) = -\sum_{r\ge1}\frac{x^{mr}}{r}\), we obtain
\[
\log F_g(x)
=
-\sum_{m,r\ge1}\frac{\beta_g(m)}r x^{mr}.
\]
Collecting the coefficients of \(x^n\) gives
\[
\log F_g(x)
=
-\sum_{n\ge1}
\left(
\frac1n
\sum_{d\mid n} d\,\beta_g(d)
\right)x^n.
\]
Comparing this directly with the definition \(\log F_g(x) = -\sum_{n\ge1}\frac{g(n)}n x^n\), we find
\[
g(n)=\sum_{d\mid n}d\,\beta_g(d).
\]
Applying classical Möbius inversion yields the desired formula for \(\beta_g(n)\).
\end{proof}

This structural identity immediately allows us to express the coefficients of the series expansion in terms of partial Bell polynomials.

\begin{corollary}[Bell-polynomial representation]
The Bell transform \(F_g(x)\) admits a power-series expansion
\[
F_g(x)
=
\sum_{n\ge0}a_g(n)x^n,
\qquad a_g(0)=1,
\]
where the coefficients \(a_g(n)\) admit the explicit closed form
\[
a_g(n)
=
\frac1{n!}
B_n\!\bigl(
-0!g(1),\,
-1!g(2),\,
\dots,\,
-(n-1)!g(n)
\bigr),
\]
and \(B_n\) denotes the complete exponential Bell polynomial.
\end{corollary}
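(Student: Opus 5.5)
The plan is to match the exponent in the definition of \(F_g\) directly against the defining generating function of the complete Bell polynomials. The Euler-product form of Theorem~\ref{thm:euler_product} is not needed for this; it only motivates the statement. First, since the exponent \(-\sum_{n\ge1}\frac{g(n)}{n}x^n\) has zero constant term, its formal exponential is a well-defined element of \(\mathbb{C}[[x]]\) with constant term \(1\). This gives the expansion \(F_g(x)=\sum_{n\ge0}a_g(n)x^n\) with \(a_g(0)=1\).

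The key step is to rewrite the exponent in exponential form. Set \(t=x\) and define \(x_m:=-(m-1)!\,g(m)\) for \(m\ge1\). Then
\[
x_m\frac{t^m}{m!}=-\frac{(m-1)!\,g(m)}{m!}\,x^m=-\frac{g(m)}{m}\,x^m,
\]
so that
\[
-\sum_{m\ge1}\frac{g(m)}{m}x^m=\sum_{m\ge1}x_m\frac{t^m}{m!}.
\]
Substituting into the generating-function definition of the complete Bell polynomials in the preliminaries gives
\[
F_g(x)=\sum_{n\ge0}\frac{B_n(x_1,\dots,x_n)}{n!}\,x^n.
\]
Comparing coefficients of \(x^n\) in \(\mathbb{C}[[x]]\) then yields the claimed closed form for \(a_g(n)\). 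The \(n=0\) case \(B_0=1\) is consistent with \(a_g(0)=1\).

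The only point needing care is justifying the substitution. The defining identity for \(B_n\) is an identity in \(\mathbb{Q}[x_1,x_2,\dots][[t]]\), and specializing the indeterminates \(x_m\) to complex numbers is a ring homomorphism. This specialization commutes with the formal exponential because each coefficient of \(t^n\) involves only finitely many \(x_m\), namely \(x_1,\dots,x_n\). Granting this, the argument is a single coefficient comparison, and I do not expect any real obstacle.
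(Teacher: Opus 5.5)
Your proposal is correct and follows the same route as the paper: substitute $x_m=-(m-1)!\,g(m)$ into the generating function of the complete Bell polynomials and compare coefficients of $x^n$. Your remark that specialization commutes with the formal exponential is a small rigor point that the paper leaves implicit.
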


\begin{proof}
Using the classical generating identity for complete Bell polynomials,
\[
\exp\!\left(
\sum_{m\ge1}x_m\frac{t^m}{m!}
\right)
=
\sum_{n\ge0}\frac{B_n(x_1,\dots,x_n)}{n!}t^n,
\]
we make the substitution \(x_m=-(m-1)!g(m)\). Substituting this into the exponential definition of \(F_g(x)\) directly yields the stated formula.
\end{proof}

\begin{corollary}[Power-series recurrence expansion]
The coefficients \(a_g(n)\) of the power-series expansion \(F_g(x) = \sum_{n\ge0}a_g(n)x^n\) satisfy the linear recurrence relation
\[
n\,a_g(n)
=
-\sum_{k=1}^n g(k)\,a_g(n-k),
\qquad (n\ge1).
\]
\end{corollary}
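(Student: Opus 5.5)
The recurrence comes from the logarithmic derivative of \(F_g\), differentiated formally in \(\mathbb{C}[[x]]\). Set \(G(x)=-\sum_{n\ge1}\frac{g(n)}{n}x^n\), so that \(F_g=\exp(G)\) by the definition in Theorem~\ref{thm:euler_product}. Since \(G\) has zero constant term, \(\exp(G)\) is a well-defined formal power series. The chain rule for formal derivatives of compositions with such series gives
\[
F_g'(x)=G'(x)\,F_g(x).
\]
Multiplying by \(x\) turns this into the convenient form
\[
xF_g'(x)=\Bigl(-\sum_{k\ge1}g(k)x^k\Bigr)F_g(x),
\]
because \(xG'(x)=-\sum_{k\ge1}g(k)x^k\). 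The factor \(1/k\) in \(G\) is cancelled exactly by differentiation; this is why the recurrence carries weight \(n\) on the left.

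Next I would extract coefficients. Write \(F_g(x)=\sum_{n\ge0}a_g(n)x^n\), with \(a_g(0)=1\) as in the preceding Corollary. The coefficient of \(x^n\) in \(xF_g'(x)\) is \(n\,a_g(n)\). On the right, the Cauchy product of \(-\sum_{k\ge1}g(k)x^k\) with \(\sum_{j\ge0}a_g(j)x^j\) has \(x^n\)-coefficient \(-\sum_{k=1}^{n}g(k)\,a_g(n-k)\). The sum is finite, since \(k\ge1\) and \(n-k\ge0\). Equating the two coefficients for \(n\ge1\) gives the stated recurrence. Together with \(a_g(0)=1\), the recurrence determines every \(a_g(n)\) uniquely.

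The only step needing justification is the formal identity \(\frac{d}{dx}\exp(G)=G'\exp(G)\) in \(\mathbb{C}[[x]]\). I would verify it termwise: \(\exp(G)=\sum_{r\ge0}G^r/r!\), and only finitely many terms contribute to each coefficient because \(G\) has no constant term. The product rule gives \((G^r)'=rG^{r-1}G'\), and summing over \(r\) yields \(G'\exp(G)\). Everything else is bookkeeping.

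As a consistency check, the same recurrence follows from the Bell-polynomial representation in the preceding Corollary via the standard recurrence \(B_{n}=\sum_{k=1}^{n}\binom{n-1}{k-1}x_kB_{n-k}\). Substituting \(x_k=-(k-1)!g(k)\) and dividing by \((n-1)!\) reproduces \(n\,a_g(n)=-\sum_{k=1}^n g(k)a_g(n-k)\). This confirms the signs and normalizations.
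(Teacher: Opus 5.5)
Your proof is correct, but your main argument is not the one the paper uses. The paper never differentiates $F_g$. It starts from the Bell-polynomial representation $a_g(n)=B_n(x_1,\dots,x_n)/n!$ with $x_m=-(m-1)!g(m)$, and cites the standard recurrence $B_{n+1}=\sum_{i=0}^{n}\binom{n}{i}B_{n-i}x_{i+1}$ as a known fact. It then does the factorial bookkeeping: $\binom{n}{i}i!=n!/(n-i)!$, division by $(n+1)!$, and reindexing $k=i+1$. That is exactly the computation you relegate to a consistency check.

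Your route works directly with the ordinary generating function, via $xF_g'=xG'F_g$ and a Cauchy product, so it is self-contained. Its only input is the formal identity $(\exp G)'=G'\exp G$, which you justify by local finiteness of $\sum_r G^r/r!$. It does not depend on the preceding corollary at all.

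The paper's route keeps the argument inside its Bell-polynomial framework and ties the recurrence to the closed form it has just established. However, it outsources the real content to the Bell recurrence, whose usual proof is itself a logarithmic-derivative computation: differentiate $\exp(\sum_m x_m t^m/m!)$ in $t$. Your argument is therefore the underlying mechanism with the exponential-generating-function layer removed. It also makes transparent why the factor $1/n$ in the exponent produces the weight $n$ on the left-hand side.
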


\begin{proof}
By the preceding corollary, we have \( a_g(n) = \frac{1}{n!} B_n(x_1,\dots,x_n) \), where \(x_m=-(m-1)!g(m)\). The complete Bell polynomials satisfy the recursive identity\
\[
B_{n+1}(x_1,\dots,x_{n+1})
=
\sum_{i=0}^{n}
\binom{n}{i}
B_{n-i}(x_1,\dots,x_{n-i})\,x_{i+1},
\]
with \(B_0=1\). Substituting \(x_{i+1}=-i!g(i+1)\) gives
\[
B_{n+1}
=
-\sum_{i=0}^{n}
\binom{n}{i}
B_{n-i}\, i!\, g(i+1).
\]
Dividing both sides by \((n+1)!\) and utilizing the identity \(\binom{n}{i}i! = \frac{n!}{(n-i)!}\) yields
\[
a_g(n+1)
=
-\frac1{n+1}
\sum_{i=0}^{n}
\frac{B_{n-i}}{(n-i)!}\,
g(i+1).
\]
Recognizing that \(a_g(n-i)=\frac{B_{n-i}}{(n-i)!}\), we find
\[
(n+1)a_g(n+1)
=
-\sum_{i=0}^{n}
g(i+1)\,a_g(n-i).
\]
Shifting the summation index by setting \(k = i+1\) produces the desired recurrence relation for \(n\,a_g(n)\).
\end{proof}

\begin{theorem}[Congruence Inheritance via the Bell Transform] \label{thm:congruence}
Let \(p\) be a prime, \(g(n)\) be an arithmetic function, and let \(\beta_g(m) = \frac1m(\mu*g)(m)\) be its associated Bell exponents. If
\[
\beta_g(m)\equiv0\pmod p
\qquad
\text{for all } m\in(\mathbb Z/p\mathbb Z)^\times.
\]
then the Bell-transform coefficients \(F_g(x) = \sum_{n\ge0}a_g(n)x^n\) satisfy the congruence
\[
\boxed{
a_g(n)\equiv0\pmod p
\qquad \text{for all } n\in(\mathbb Z/p\mathbb Z)^\times.
}
\]
\end{theorem}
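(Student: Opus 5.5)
The plan is to work directly with the Euler-product form of Theorem~\ref{thm:euler_product} and reduce it modulo $p$, rather than using the recurrence or the Bell-polynomial formula. I read the hypothesis "$m\in(\mathbb Z/p\mathbb Z)^\times$" as "$p\nmid m$", and likewise for $n$ in the conclusion. For the congruences to make sense, I also assume that all exponents $\beta_g(m)$ are integers, or at least $p$-integral. This standing assumption is implicit in the statement, since $\beta_g(m)\equiv 0 \pmod p$ is only meaningful for $p$-integral values, and I will state it explicitly at the start. With it, every factor $(1-x^m)^{\beta_g(m)}$ lies in $\mathbb Z_{(p)}[[x]]$, because for negative exponents $(1-x^m)^{-k}=\sum_j\binom{k+j-1}{j}x^{mj}$ has integer coefficients. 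Hence $F_g(x)\in\mathbb Z_{(p)}[[x]]$, and reduction modulo $p$ is a ring homomorphism to $\mathbb F_p[[x]]$ that respects the infinite product, since the product converges $x$-adically.

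Next, I split the product as
\[
F_g(x)=\prod_{p\mid m}(1-x^m)^{\beta_g(m)}\cdot\prod_{p\nmid m}(1-x^m)^{\beta_g(m)} .
\]
The first factor is visibly a power series in $x^p$, since every $m$ it involves is a multiple of $p$. For the second factor, I write $\beta_g(m)=p\,k_m$ with $k_m\in\mathbb Z_{(p)}$, which is possible by hypothesis. The Frobenius identity $(1-x^m)^p\equiv 1-x^{mp}\pmod p$ in $\mathbb F_p[[x]]$ then gives
\[
(1-x^m)^{\beta_g(m)}\equiv(1-x^{mp})^{k_m}\pmod p .
\]
To justify raising to a possibly negative or $p$-integral power $k_m$, I would note that $(1-x^{m})^{p}$ and $1-x^{mp}$ are both units congruent to $1$ modulo $x$. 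Powers $u^{k}$ for $k\in\mathbb Z_{(p)}$ are defined via $\exp(k\log u)$, or equivalently by the binomial series, which has $p$-integral coefficients for $p$-integral $k$. This operation is compatible with reduction modulo $p$. Alternatively, and more simply, I would restrict to integer $\beta_g$, where only ordinary integer powers and inverses occur.

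Consequently, the reduction of $F_g(x)$ modulo $p$ is a product of power series in $x^p$, hence itself lies in $\mathbb F_p[[x^p]]$. Therefore $a_g(n)\equiv 0\pmod p$ whenever $p\nmid n$, which is the boxed conclusion.

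I expect the main obstacle to be purely foundational: making sure that the passage of the infinite product through reduction modulo $p$, and the Frobenius step with negative or non-integral exponents, is rigorous. The combinatorial content is just "$(1-y)^p\equiv 1-y^p$". I would handle the foundational point by truncating. Modulo $x^{N+1}$, only the finitely many factors with $m\le N$ matter, so the argument takes place in a finite product in $\mathbb Z_{(p)}[x]/(x^{N+1})$, where everything is elementary.
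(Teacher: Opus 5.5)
Your argument is correct under your standing assumption, and it takes a genuinely different route from the paper.

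\textbf{How the two proofs differ.} The paper works with the recurrence rather than the product. It first uses $g(n)=\sum_{d\mid n}d\,\beta_g(d)$ to get $g(n)\equiv 0\pmod p$ for $p\nmid n$. It then runs a strong induction on $n\,a_g(n)=-\sum_{k=1}^{n}g(k)\,a_g(n-k)$: terms with $p\nmid k$ die because $g(k)\equiv 0$, terms with $p\mid k$ die by the inductive hypothesis on $a_g(n-k)$, and finally one divides by the unit $n$. That route is elementary and reuses the recurrence already proved.

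Your Frobenius argument is more structural. It identifies $F_g \bmod p$ explicitly as the reduction of $\prod_{p\mid m}(1-x^m)^{\beta_g(m)}\prod_{p\nmid m}(1-x^{pm})^{\beta_g(m)/p}$, which lies in $\mathbb{F}_p[[x^p]]$. It also makes the exact-vanishing theorem immediate, since the second product is then identically $1$ and no integrality is needed at all.

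\textbf{What your route buys on integrality.} It forces the integrality bookkeeping into the open. The paper's induction tacitly needs every $g(k)$, and every $a_g(j)$ including those with $p\mid j$, to be $p$-integral. Case~1 multiplies $g(k)\equiv 0$ by $a_g(n-k)$ where possibly $p\mid n-k$, and the recurrence cannot guarantee $p$-integrality there because it divides by $n$.

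\textbf{One correction to your framing.} $p$-integrality of $\beta_g(m)$ is ``implicit in the statement'' only for $p\nmid m$. For $p\mid m$ it is a genuine additional hypothesis, and it cannot be dropped. For $p=2$, take $g(n)=2$ for odd $n$ and $g(n)=3$ for even $n$. Then $\beta_g(1)=2$, $\beta_g(2)=\tfrac12$, and $\beta_g(m)=0$ otherwise. So $F_g(x)=(1-x)^2(1-x^2)^{1/2}$ satisfies the hypothesis, yet $a_g(3)=(-2)\cdot(-\tfrac12)=1$ is odd. Here $g$ is even integer-valued, and the paper's induction breaks exactly at $g(1)\,a_g(2)=2\cdot\tfrac12$.

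State $p$-integrality of all $\beta_g(m)$ as an explicit extra hypothesis. It holds in every application in the paper, where the exponents are integers, and under it both your proof and the paper's are valid. If you keep non-integral $p$-integral exponents, justify $u\equiv v\pmod p\Rightarrow u^{k}\equiv v^{k}\pmod p$ as follows. Write $u=v(1+pw)$ with $w\in x\,\mathbb{Z}_{(p)}[[x]]$ and expand $(1+pw)^{k}=\sum_j\binom{k}{j}p^jw^j$, using $\binom{k}{j}\in\mathbb{Z}_{(p)}$.
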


\begin{proof}
We proceed by strong induction on \(n\) using the Bell recurrence relation. First, observe that the inverse Bell formula gives \(g(n) = \sum_{d\mid n} d\,\beta_g(d)\).

Assume that \(p\nmid n\). Then every divisor \(d\mid n\) also satisfies \(p\nmid d\). By hypothesis, \(\beta_g(d)\equiv0\pmod p\) for every such divisor. Hence,
\begin{equation}
g(n)
=
\sum_{d\mid n} d\,\beta_g(d)
\equiv0\pmod p \qquad (p\nmid n).
\label{eq:gn_mod_p}
\end{equation}

We now prove \(a_g(n)\equiv0\pmod p\) for \(p\nmid n\) by induction. The base case \(a_g(0)=1\) is unaffected as \(p\nmid0\) does not apply. Assume inductively that \(a_g(m)\equiv0\pmod p\) for all \(m<n\) where \(p\nmid m\). 

For an integer \(n\) such that \(p\nmid n\), we analyze the recurrence relation \(n\,a_g(n) = -\sum_{k=1}^n g(k)a_g(n-k)\) modulo \(p\). For any term \(g(k)a_g(n-k)\):
\begin{itemize}
    \item \textbf{Case 1 (\(p\nmid k\)):} By \eqref{eq:gn_mod_p}, \(g(k)\equiv0\pmod p\), nullifying the term.
    \item \textbf{Case 2 (\(p\mid k\)):} Since \(p\nmid n\), it follows that \(p\nmid(n-k)\). Because \(n-k < n\), the inductive hypothesis guarantees \(a_g(n-k)\equiv0\pmod p\), nullifying the term.
\end{itemize}
Thus, every term in the recurrence vanishes modulo \(p\), leaving \(n\,a_g(n)\equiv0\pmod p\). Because \(p\nmid n\), \(n\) is invertible modulo \(p\), forcing \(a_g(n)\equiv0\pmod p\).
\end{proof}

\begin{corollary}[Congruences for Ramanujan's Tau Function]
Let the modular discriminant be defined as
\[
\Delta(q)
=
q\prod_{m\ge1}(1-q^m)^{24}
=
\sum_{n\ge1}\tau(n)q^n.
\]
Then \(\tau(2n)\equiv0\pmod2\), and \(\tau(3n-1)\equiv0\pmod3\), \(\tau(3n)\equiv0\pmod3\).
\end{corollary}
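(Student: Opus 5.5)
The plan is to realize $\prod_{m\ge1}(1-q^m)^{24}$ as a Bell transform $F_g$ in the sense of Theorem~\ref{thm:euler_product}, apply Theorem~\ref{thm:congruence} with $p=2$ and $p=3$, and then account for the index shift caused by the factor $q$ in $\Delta$.

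\textbf{Step 1: the arithmetic function.} I take the constant Bell exponents $\beta(m)=24$ for all $m\ge1$. The relation $g(n)=\sum_{d\mid n}d\,\beta(d)$ from the proof of Theorem~\ref{thm:euler_product} then forces $g(n)=24\,\sigma(n)$, where $\sigma(n)=\sum_{d\mid n}d$. Conversely, with this $g$ we get $\frac1m(\mu*g)(m)=\frac{24}{m}(\mu*\sigma)(m)=\frac{24}{m}\cdot m=24$, since $\mu*\sigma=\mathrm{id}$. So Theorem~\ref{thm:euler_product} gives
\[
F_g(q)=\prod_{m\ge1}(1-q^m)^{24}=\sum_{n\ge0}a_g(n)q^n .
\]
Comparing with $\Delta(q)=q\,F_g(q)$ yields $\tau(n)=a_g(n-1)$ for $n\ge1$. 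The $a_g(n)$ are integers because they are coefficients of a finite-exponent integer product. Hence congruences modulo $p$ make sense, and they agree with the modular arithmetic in the proof of Theorem~\ref{thm:congruence}, which only inverts $n$ with $p\nmid n$.

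\textbf{Step 2: apply Theorem~\ref{thm:congruence}.} For $p=2$ and $p=3$ we have $\beta(m)=24\equiv0\pmod p$ for every $m$, in particular for every $m$ with $p\nmid m$. The theorem therefore gives $a_g(n)\equiv0\pmod p$ whenever $p\nmid n$.

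\textbf{Step 3: translate the index shift.} For $p=2$: $\tau(2n)=a_g(2n-1)$ and $2n-1$ is odd, so $\tau(2n)\equiv0\pmod2$. For $p=3$:
\begin{itemize}
\item $\tau(3n)=a_g(3n-1)$, and $3n-1\equiv2\pmod3$;
\item $\tau(3n-1)=a_g(3n-2)$, and $3n-2\equiv1\pmod3$.
\end{itemize}
In both cases the index is prime to $3$, so both values are $\equiv0\pmod3$. Here $n\ge1$ throughout, so all indices are nonnegative.

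The only step needing care is Step 3, getting the shift right: the residue classes killed by the theorem are $n\not\equiv0\pmod p$, and multiplying by $q$ moves them to $n\not\equiv1\pmod p$ for $\tau$. For $p=3$ this gives exactly the classes $0$ and $2$, matching the stated congruences. The class $\tau(3n+1)$ is not claimed and indeed need not vanish, for example $\tau(1)=1$.
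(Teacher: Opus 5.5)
Your proposal is correct and follows the paper's own argument: you use the constant Bell exponents $\beta(m)=24$ of $\prod_{m\ge1}(1-q^m)^{24}$, apply Theorem~\ref{thm:congruence} with $p=2$ and $p=3$, and then shift indices via $\tau(n)=a_g(n-1)$. Two details the paper leaves implicit are handled explicitly in your version: you identify the driver as $g=24\sigma$, and you note that the $a_g(n)$ are integers, which is what makes the mod-$p$ induction legitimate.
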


\begin{proof}
Extracting the power of \(q\), the Bell exponents of \(\Delta(q)/q\) are constant: \(\beta(m)=24\) for all \(m\ge1\). Since \(24\equiv0\pmod2\), Theorem \ref{thm:congruence} with \(p=2\) yields \(\tau(n+1)\equiv0\pmod2\) for \(2\nmid n\). Writing \(n=2m-1\), we obtain \(\tau(2m)\equiv0\pmod2\). 

Similarly, \(24\equiv0\pmod3\). Applying Theorem \ref{thm:congruence} with \(p=3\) gives \(\tau(n+1)\equiv0\pmod3\) for \(3\nmid n\). This implies \(n\equiv1,2\pmod3\), which yields the congruences for \(\tau(3m-1)\) and \(\tau(3m)\).
\end{proof}

\begin{corollary}[Prime-Colored Partition Congruences]
Let \(p_k(n)\) denote the number of \(k\)-colored partitions of \(n\), generated by \(P_k(x) = \prod_{m\ge1}(1-x^m)^{-k}\). If \(p\) is prime, then \(p_p(n)\equiv0\pmod p\) whenever \(p\nmid n\).
\end{corollary}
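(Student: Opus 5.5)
The plan is to realize $P_p(x)$ as a Bell transform and apply Theorem~\ref{thm:congruence} with the prime $p$. Comparing $P_p(x)=\prod_{m\ge1}(1-x^m)^{-p}$ with the Euler-product form in Theorem~\ref{thm:euler_product}, we want constant Bell exponents $\beta_g(m)=-p$ for every $m\ge1$. Since the correspondence in Theorem~\ref{thm:euler_product} is invertible, we recover $g$ from $g(n)=\sum_{d\mid n} d\,\beta_g(d)$. This gives
\[
g(n)=-p\,\sigma(n),
\]
where $\sigma(n)$ is the sum of divisors of $n$. We then check that $F_g(x)=\exp\bigl(p\sum_{n\ge1}\sigma(n)x^n/n\bigr)$ equals $P_p(x)$. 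This follows by running the logarithm computation in the proof of Theorem~\ref{thm:euler_product} backwards, or directly from $-\log(1-x^m)=\sum_{r\ge1}x^{mr}/r$. Consequently $a_g(n)=p_p(n)$ for all $n\ge0$.

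Next, the hypothesis of Theorem~\ref{thm:congruence} holds trivially: $\beta_g(m)=-p\equiv0\pmod p$ for every $m$, in particular for every $m$ with $p\nmid m$. The theorem then yields $a_g(n)\equiv0\pmod p$ whenever $p\nmid n$, which is $p_p(n)\equiv0\pmod p$ for $p\nmid n$.

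The only point needing care is integrality. The congruences in Theorem~\ref{thm:congruence} are meant in $\mathbb Z$, or in $\mathbb Z_{(p)}$ where the recurrence divides by $n$. Here $g(n)=-p\sigma(n)$ and the $p_p(n)$ are integers, so the recurrence $n\,a_g(n)=-\sum_{k=1}^n g(k)a_g(n-k)$ lives entirely in $\mathbb Z$. Dividing by $n$ when $p\nmid n$ is then legitimate modulo $p$, so no subtlety arises.

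As an independent sanity check, the Frobenius congruence $(1-x^m)^p\equiv 1-x^{mp}\pmod p$ gives
\[
P_p(x)\equiv\prod_{m\ge1}(1-x^{pm})^{-1}\pmod p.
\]
This is a series in $x^p$ alone, which confirms the result directly.
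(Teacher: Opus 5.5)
Your proposal is correct and follows the same route as the paper: the Bell exponents of $P_p(x)$ are the constant $\beta(m)=-p\equiv 0\pmod p$, so Theorem~\ref{thm:congruence} applies directly. Your explicit driver $g(n)=-p\,\sigma(n)$ and your integrality remark spell out what the paper leaves implicit, and your Frobenius check $(1-x^m)^p\equiv 1-x^{mp}\pmod p$ is in fact a complete independent proof.
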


\begin{proof}
The Bell exponents are \(\beta(m)=-p\), which trivially satisfies \(\beta(m)\equiv0\pmod p\) for all \(m\). Applying Theorem \ref{thm:congruence} yields the result.
\end{proof}

The logic of Theorem \ref{thm:congruence} can be upgraded from congruences to exact equality if the exponents vanish entirely.

\begin{theorem}[Exact Vanishing via the Bell Transform] \label{thm:vanishing}
Let \(p\) be a prime and let \(g(n)\) be an arithmetic function. Suppose that the Bell exponents satisfy \(\beta_g(m)=0\) whenever \(p\nmid m\). Then the Bell-transform coefficients satisfy the exact vanishing property
\[
\boxed{
a_g(n)=0
\qquad
\text{whenever }p\nmid n.
}
\]
\end{theorem}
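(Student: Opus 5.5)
The plan is to rerun the strong-induction argument of Theorem \ref{thm:congruence}, replacing every congruence modulo \(p\) by an exact equality. The first step transfers the vanishing hypothesis from the Bell exponents to \(g\) itself. By the inverse relation \(g(n)=\sum_{d\mid n} d\,\beta_g(d)\) established in the proof of Theorem \ref{thm:euler_product}, if \(p\nmid n\) then every divisor \(d\mid n\) also satisfies \(p\nmid d\). Hence \(\beta_g(d)=0\) for each such divisor, and therefore \(g(n)=0\) exactly whenever \(p\nmid n\).

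The second step is a strong induction on \(n\) using the power-series recurrence corollary,
\[
n\,a_g(n) = -\sum_{k=1}^n g(k)\,a_g(n-k).
\]
Fix \(n\) with \(p\nmid n\), and assume \(a_g(m)=0\) for all \(m<n\) with \(p\nmid m\). We split the terms of the sum into two cases. If \(p\nmid k\), then \(g(k)=0\) by the first step. If \(p\mid k\), then \(p\nmid n-k\) because \(p\nmid n\). In this case \(n-k<n\), and \(n-k\neq 0\) since \(p\mid 0\), so the inductive hypothesis gives \(a_g(n-k)=0\). Every term therefore vanishes, and \(n\,a_g(n)=0\). Since \(n\ge1\) is a nonzero complex number, we conclude \(a_g(n)=0\). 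The base case is covered automatically: for the smallest such \(n\), the only candidate nonzero term is \(a_g(0)\), and it is paired with \(g(n)\), which is zero.

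An alternative, structural route is also available. Under the hypothesis, the Euler product of Theorem \ref{thm:euler_product} reduces to \(\prod_{p\mid m}(1-x^m)^{\beta_g(m)}\), which is a formal power series in \(x^p\). Its coefficients are therefore supported on multiples of \(p\). This argument requires care about the meaning of complex exponents, but \((1-x^m)^{\beta}=\exp(\beta\log(1-x^m))\) is a series in \(x^m\), so it also works.

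I do not expect a genuine obstacle here. The only point needing care is the bookkeeping in the case \(p\mid k\): one must check that the index \(n-k\) stays in the range where the inductive hypothesis applies, and that division by \(n\) is legitimate over \(\mathbb{C}\). Both are immediate.
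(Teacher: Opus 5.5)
Your proof is correct and takes essentially the paper's approach: the paper simply reruns the strong induction of Theorem~\ref{thm:congruence} on the recurrence $n\,a_g(n)=-\sum_{k=1}^n g(k)\,a_g(n-k)$ with congruences replaced by exact equalities. You follow that route and write out the details the paper leaves implicit, namely $g(n)=0$ for $p\nmid n$, $n-k\neq 0$ when $p\mid k$, and the legitimacy of dividing by $n$ over $\mathbb{C}$. Your alternative observation is also valid: under the hypothesis $\log F_g$ (equivalently the Euler product) is a series in $x^p$, which gives an even shorter proof.
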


\begin{proof}
The proof proceeds by strong induction identically to Theorem \ref{thm:congruence}, replacing congruence modulo \(p\) with strict equality to zero.
\end{proof}

\begin{corollary}[Vanishing Coefficients of Cyclotomic Polynomials]
Let \(p\) be a prime such that \(p^2\mid n\). Let \(\Phi_n(x) = \sum_{k=0}^{\varphi(n)} a_n(k)x^k\) denote the \(n\)-th cyclotomic polynomial. Then \(a_n(k)=0\) whenever \(p\nmid k\).
\end{corollary}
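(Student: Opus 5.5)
We will realize $\Phi_n(x)$ as a Bell transform and then apply Theorem \ref{thm:vanishing}. The starting point is the classical product formula
\[
\Phi_n(x)=\prod_{d\mid n}(1-x^d)^{\mu(n/d)},
\]
valid for $n\ge2$. Both sides are polynomials with constant term $1$: for $n\ge 2$ we have $\sum_{d\mid n}\mu(n/d)=0$, so the sign coming from $x^d-1=-(1-x^d)$ cancels. This is already an Euler-type product $\prod_{m\ge1}(1-x^m)^{\beta(m)}$ with
\[
\beta(m)=\begin{cases}\mu(n/m), & m\mid n,\\ 0, & m\nmid n.\end{cases}
\]
Taking the formal logarithm, as in the proof of Theorem \ref{thm:euler_product}, gives $\Phi_n=F_g$ with $g(k)=\sum_{d\mid k} d\,\beta(d)$. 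Since the Möbius inversion in that proof is a bijection between $g$ and $\beta_g$, we have $\beta_g=\beta$ exactly. Hence the coefficients $a_g(k)$ of Theorem \ref{thm:euler_product} coincide with the cyclotomic coefficients $a_n(k)$, with $a_n(k)=0$ for $k>\varphi(n)$.

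The key step is to verify the hypothesis of Theorem \ref{thm:vanishing}, namely that $\beta(m)=0$ whenever $p\nmid m$. If $m\nmid n$, then $\beta(m)=0$ by definition. If $m\mid n$ and $p\nmid m$, then all factors of $p$ in $n$ remain in $n/m$. Because $p^2\mid n$, this gives $p^2\mid n/m$, so $n/m$ is not squarefree and $\mu(n/m)=0$. So $\beta(m)=0$ for every $m$ coprime to $p$.

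Theorem \ref{thm:vanishing} then yields $a_n(k)=0$ for all $p\nmid k$, which is the claim. The only genuinely delicate point is the identification $\Phi_n=F_g$, including the sign and constant-term normalization. This is why $n\ge2$ matters; it is automatic here, since $p^2\mid n$ forces $n\ge4$. Everything else is a direct substitution into the earlier results.

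As a sanity check, $\Phi_n(x)=\Phi_{n/p}(x^p)$ when $p^2\mid n$, which gives the same conclusion directly.
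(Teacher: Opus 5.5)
Your proposal is correct and follows the paper's argument: you read off $\beta(m)=\mu(n/m)$ for $m\mid n$ (and $0$ otherwise) from the product formula, observe that $p^2\mid n/m$ whenever $p\nmid m$ so that $\mu(n/m)=0$, and then apply Theorem \ref{thm:vanishing}. Your extra checks, namely the sign normalization for $n\ge 2$ and the fact that $\beta_g=\beta$ exactly, fill in details the paper leaves implicit.
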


\begin{proof}
The cyclotomic polynomial admits the finite product representation \(\Phi_n(x) = \prod_{m\mid n} (1-x^m)^{\mu(n/m)}\). Thus, the Bell exponents are \(\beta(m) = \mu(n/m)\) if \(m\mid n\), and \(0\) otherwise. 

Assume \(p\nmid m\). If \(m\mid n\), since \(p^2\mid n\), the quotient \(n/m\) must retain the factor of \(p^2\). Because the Möbius function vanishes on integers divisible by a prime square, \(\mu(n/m)=0\). Hence \(\beta(m)=0\) for all \(m\) coprime to \(p\). Theorem \ref{thm:vanishing} directly implies \(a_n(k)=0\) for \(p\nmid k\).
\end{proof}
\section{Applications to Classical Arithmetic Functions}

The Bell-transform framework becomes especially transparent when applied to structured arithmetic functions. 
For each arithmetic driver \(g(n)\), we state:

\begin{enumerate}
    \item the generating function \(F_g(x)\),
    \item the corresponding Euler product,
    \item the Bell recurrence relation,
    \item and the first few coefficients of the associated power series.
\end{enumerate}

\subsection{Completely Multiplicative Functions}

\subsubsection{The Dirichlet Identity Function}

Let \(\varepsilon(n)\) denote the Dirichlet identity:
\[
\varepsilon(1)=1,
\qquad
\varepsilon(n)=0
\quad(n\ge2).
\]

The Bell transform becomes
\[
F_\varepsilon(x)
=
e^{-x}
=
\prod_{n\ge1}
(1-x^n)^{\mu(n)/n}.
\]

\subsubsection{Power Functions}

For the arithmetic driver
\[
g(n)=n^k,
\]
the Bell exponents become
\[
\beta(n)=\frac{J_k(n)}{n},
\]
where \(J_k\) denotes the Jordan totient function.

The generating function is
\[
F_{I_k}(x)
=
e^{-\Li_{1-k}(x)}
=
\prod_{n\ge1}
(1-x^n)^{J_k(n)/n}=\sum_{n=0}^{\infty} a_{I_k}(n) x^n
\]

The coefficients satisfy the recurrence
\[
\boxed{
n\,a_{I_k}
=
-\sum_{j=1}^n
j^k\,a_{I_k}(n-j).
}
\]

The first terms are
\[
\begin{aligned}
a_{I_k}(0)&=1,\\
a_{I_k}(1)&=-1,\\
a_{I_k}(2)&=\frac{1-2^k}{2},\\
a_{I_k}(3)&=\frac{-2+3\cdot2^k-2\cdot3^k}{6},\\
a_{I_k}(4)&=
\frac{
6-11\cdot2^k+6\cdot2^{2k}
+8\cdot3^k-6\cdot4^k
}{24}.
\end{aligned}
\]

\subsubsection{Dirichlet Characters}

Let \(\chi_4(n)\) denote the quadratic character modulo \(4\):
\[
\chi_4(n)
=
\begin{cases}
0,&2\mid n,\\
1,&n\equiv1\pmod4,\\
-1,&n\equiv3\pmod4.
\end{cases}
\]

The Bell transform gives
\[
F_{\chi_4}(x)
=
e^{-\arctan x}
=
\prod_{n\ge1}
(1-x^n)^{
(\mu*\chi_4)(n)/n
}=\sum_{n=0}^{\infty} a_{\chi_4}(n) x^n
\]

The recurrence becomes
\[
\boxed{
n\,a_{\chi_4}(n)
=
-\sum_{j=1}^n
\chi_4(j)\,a_{\chi_4}(n-j).
}
\]

The first coefficients are
\[
\begin{aligned}
a_{\chi_4}(0)&=1,\\
a_{\chi_4}(1)&=-1,\\
a_{\chi_4}(2)&=\frac12,\\
a_{\chi_4}(3)&=-\frac16,\\
a_{\chi_4}(4)&=-\frac5{24}.
\end{aligned}
\]

Thus
\[
e^{-\arctan x}
=
1-x+\frac{x^2}{2}-\frac{x^3}{6}
-\frac5{24}x^4+\cdots.
\]

\subsection{Multiplicative Functions}

\subsubsection{Euler's Totient Function}

For the arithmetic driver
\[
g(n)=\varphi(n),
\]

\[
F_\varphi(x)
=
\exp\!\left(
\frac{x}{x-1}
\right)
=
\prod_{n\ge1}
(1-x^n)^{
(\mu*\varphi)(n)/n
}=\sum_{n=0}^{\infty}a_\varphi(n)x^n\]

The coefficients satisfy
\[
\boxed{
n\,a_\varphi(n)
=
-\sum_{j=1}^n
\varphi(j)\,a_\varphi(n-j).
}
\]

Expanding recursively:
\[
\begin{aligned}
a_\varphi(0)&=1,\\
a_\varphi(1)&=-1,\\
a_\varphi(2)&=-\frac12,\\
a_\varphi(3)&=-\frac16,\\
a_\varphi(4)&=\frac1{24}.
\end{aligned}
\]

Hence
\[
\exp\!\left(
\frac{x}{x-1}
\right)
=
1-x-\frac{x^2}{2}
-\frac{x^3}{6}
+\frac{x^4}{24}
+\cdots.
\]

\subsubsection{Ramanujan Sums and Cyclotomic Polynomials}

For the arithmetic driver
\[
g(n)=c_q(n),
\]
the Bell transform produces the cyclotomic polynomial:
\[
F_{c_q}(x)
=
\Phi_q(x)
=
\prod_{d\mid q}
(1-x^d)^{\mu(q/d)}=\sum_{n=0}^{\infty}a_{c_q}(n)x^n
\]

The coefficients satisfy
\[
\boxed{
n\,a_{c_q}(n)
=
-\sum_{j=1}^n
c_q(j)\,(n-j).
}
\]

\subsection{Completely Additive Functions}

\subsubsection{The Natural Logarithm}

For the arithmetic driver
\[
g(n)=\log n,
\]
the identity
\[
\mu*\log=\Lambda
\]
gives the Euler product
\[
F_{\log}(x)
=
\exp\!\left(
-\sum_{n\ge1}\frac{\log n}{n}x^n
\right)
=
\prod_{n=1}^{\infty}
(1-x^n)^{\Lambda(n)/n}=\sum_{n=0}^{\infty}a_{log}(n)x^n
\]

The recurrence becomes
\[
\boxed{
n\,a_{log}(n)
=
-\sum_{j=1}^n
(\log j)\,a_{log}(n-j).
}
\]

The first coefficients are
\[
\begin{aligned}
a_{log}(0)&=1,\\
a_{log}(1)&=0,\\
a_{log}(2)&=-\frac{\log2}{2},\\
a_{log}(3)&=-\frac{\log3}{3},\\
a_{log}(4)&=
\frac{\log^2(2)-2\log4}{8}.
\end{aligned}
\]

\subsection{Sums of Squares Functions}

\subsubsection{Sums of Four Squares}

Jacobi's four-square theorem gives
\[
r_4(n)=8\sigma(n)-32\sigma(n/4).
\]

The Bell transform becomes
\[
F_{r_4}(x)
=\prod_{n=1}^{\infty}\left(\frac{1-x^n}{1-x^{4n}}\right)^8=\sum_{n=0}^{\infty}a_{r_4}(n)x^n
\]

The recurrence relation is
\[
\boxed{
n\,a_{r_4}n
=
-\sum_{j=1}^n
r_4(j)\,a_{r_4}(n-j).
}
\]

The first coefficients are
\[
\begin{aligned}
a_{r_4}(0)&=1,\\
a_{r_4}(1)&=-8,\\
a_{r_4}(2)&=24,\\
a_{r_4}(3)&=-32,\\
a_{r_4}(4)&=24.
\end{aligned}
\]
and also using Theorem \ref{thm:congruence}

\[
\boxed{
a_{r_4}(n)\equiv0\pmod2
\qquad(n\ge1).
}
\]
\section{Applications to Special Polynomial Families}

The Bell-transform framework also acts as a discrete engine for classical polynomial sequences. 
Each generating function yields:
\begin{itemize}
    \item an arithmetic driver \(g(n)\),
    \item a Bell recurrence,
    \item and an explicit coefficient expansion.
\end{itemize}

\subsection{Appell Sequences}

\subsubsection{Bernoulli Polynomials \(B_n(x)\)}

The Bernoulli polynomials are generated by
\[
\frac{te^{xt}}{e^t-1}
=
\sum_{n=0}^{\infty}
B_n(x)\frac{t^n}{n!}.
\]

Comparison with the Bell-transform form gives the arithmetic driver
\[
g(n)
=
\frac{B_n}{(n-1)!}
-
x\delta_{n,1}.
\]

The Bell recurrence becomes
\[
\boxed{
B_n(x)
=
-\sum_{k=1}^n
\binom{n-1}{k-1}
B_k\,B_{n-k}(x)
+
xn\,B_{n-1}(x).
}
\]

Expanding recursively gives
\[
\begin{aligned}
B_0(x)&=1,\\
B_1(x)&=x-\frac12,\\
B_2(x)&=x^2-x+\frac16,\\
B_3(x)&=x^3-\frac32x^2+\frac12x,\\
B_4(x)&=x^4-2x^3+x^2-\frac1{30}.
\end{aligned}
\]

\subsubsection{Euler Polynomials \(E_n(x)\)}

The Euler polynomials are generated by
\[
\frac{2e^{xt}}{e^t+1}
=
\sum_{n=0}^{\infty}
E_n(x)\frac{t^n}{n!}.
\]

The Bell-transform arithmetic driver is
\[
g(n)
=
\frac{(2^n-1)B_n}{(n-1)!}
+
(1-x)\delta_{n,1}.
\]

The recurrence relation is
\[
\boxed{
E_n(x)
=
-\sum_{k=1}^n
\binom{n-1}{k-1}
(2^k-1)B_k\,E_{n-k}(x)
+
(x-1)nE_{n-1}(x).
}
\]

The first few polynomials are
\[
\begin{aligned}
E_0(x)&=1,\\
E_1(x)&=x-\frac12,\\
E_2(x)&=x^2-x,\\
E_3(x)&=x^3-\frac32x^2+\frac14,\\
E_4(x)&=x^4-2x^3+x.
\end{aligned}
\]

\subsubsection{Hermite Polynomials \(H_n(x)\)}

The physicists' Hermite polynomials satisfy
\[
\exp(2xt-t^2)
=
\sum_{n=0}^{\infty}
H_n(x)\frac{t^n}{n!}.
\]

The Bell arithmetic driver is
\[
g(1)=-2x,
\qquad
g(2)=2,
\qquad
g(n)=0
\quad(n\ge3).
\]

The Bell recurrence collapses to the classical three-term relation:
\[
\boxed{
H_n(x)
=
2xH_{n-1}(x)
-
2(n-1)H_{n-2}(x).
}
\]

Expanding recursively:
\[
\begin{aligned}
H_0(x)&=1,\\
H_1(x)&=2x,\\
H_2(x)&=4x^2-2,\\
H_3(x)&=8x^3-12x,\\
H_4(x)&=16x^4-48x^2+12.
\end{aligned}
\]

\subsection{Sheffer Sequences}

\subsubsection{Touchard Polynomials \(T_n(x)\)}

The Touchard polynomials are generated by
\[
\exp(x(e^t-1))
=
\sum_{n=0}^{\infty}
T_n(x)\frac{t^n}{n!}.
\]

The Bell arithmetic driver is
\[
g(n)
=
-\frac{x}{(n-1)!}.
\]

The recurrence becomes
\[
\boxed{
T_n(x)
=
x\sum_{j=0}^{n-1}
\binom{n-1}{j}
T_j(x).
}
\]

The first few polynomials are
\[
\begin{aligned}
T_0(x)&=1,\\
T_1(x)&=x,\\
T_2(x)&=x+x^2,\\
T_3(x)&=x+3x^2+x^3,\\
T_4(x)&=x+7x^2+6x^3+x^4.
\end{aligned}
\]

\subsubsection{Generalized Laguerre Polynomials \(L_n^{(\alpha)}(x)\)}

The generating function is
\[
(1-t)^{-\alpha-1}
\exp\!\left(
-\frac{xt}{1-t}
\right)
=
\sum_{n=0}^{\infty}
L_n^{(\alpha)}(x)t^n.
\]

The Bell arithmetic driver is
\[
g(n)=xn-(\alpha+1).
\]

The recurrence relation becomes
\[
\boxed{
nL_n^{(\alpha)}(x)
=
-\sum_{k=1}^n
(xk-\alpha-1)L_{n-k}^{(\alpha)}(x).
}
\]

The first few polynomials are
\[
\begin{aligned}
L_0^{(\alpha)}(x)&=1,\\
L_1^{(\alpha)}(x)&=-x+\alpha+1,\\
L_2^{(\alpha)}(x)&=\frac12
\left(
x^2-2(\alpha+2)x+(\alpha+1)(\alpha+2)
\right),\\
L_3^{(\alpha)}(x)&=
-\frac{x^3}{6}
+\frac{\alpha+3}{2}x^2
-\frac{(\alpha+2)(\alpha+3)}2x
+\frac{(\alpha+1)(\alpha+2)(\alpha+3)}6.
\end{aligned}
\]

\subsubsection{Poisson--Charlier Polynomials \(C_n(x;a)\)}

The generating function is
\[
e^{-at}
\left(
1+\frac{t}{a}
\right)^x
=
\sum_{n=0}^{\infty}
C_n(x;a)\frac{t^n}{n!}.
\]

The Bell arithmetic driver is
\[
g(1)=a-\frac xa,
\qquad
g(n)=\frac{x(-1)^n}{a^n}
\quad(n\ge2).
\]

The recurrence becomes
\[
\boxed{
C_n(x;a)
=
-\left(a-\frac xa\right)C_{n-1}(x;a)
-
\sum_{k=2}^n
\binom{n-1}{k-1}
\frac{x(-1)^k}{a^k}
C_{n-k}(x;a).
}
\]

The first few polynomials are
\[
\begin{aligned}
C_0(x;a)&=1,\\
C_1(x;a)&=\frac{x}{a}-a,\\
C_2(x;a)&=
\frac{x^2-x}{a^2}
-2x+a^2,\\
C_3(x;a)&=
\frac{x^3-3x^2+2x}{a^3}
-\frac{3(x^2-x)}a
+3ax-a^3.
\end{aligned}
\]

\end{document}